\newtheorem{theorem}{Theorem}[section]
\newtheorem{conjecture}[theorem]{Conjecture}
\newtheorem{lemma}[theorem]{Lemma}
\newtheorem{observation}[theorem]{Observation}
\newcommand\Z{\mathbb{Z}}
\newcommand\supp{\operatorname{supp}}
\newcommand\range{\operatorname{range}}
\begin{document}


\title{{\bf Bouchet's conjecture for cyclically 5-edge-connected, cubic signed graphs}}
\author{
{Kathryn Nurse} \\
\mbox{}\\
{\small \'Ecole Normale Sup\'erieure, Paris, France}\\
}

\date{\today}

\maketitle

\begin{abstract}
A 1983 conjecture of Bouchet states that every flow-admissible signed graph has a nowhere-zero six-flow. We prove this conjecture for cyclically five-edge-connected, cubic signed graphs.
\end{abstract}

\section{Introduction}

Nowhere-zero flows on signed graphs generalize nowhere-zero flows on graphs, and are motivated by embeddings of graphs in non-orientable surfaces \cite{Bouchet}. In 1983, Bouchet conjectured the following, which is a signed graph analogue of Tutte's famous 5-flow conjecture \cite{Tutte_1954}. 

\begin{conjecture}[Bouchet \cite{Bouchet}]\label{bcon}
    Every flow-admissible signed graph has a nowhere-zero 6-flow.
\end{conjecture}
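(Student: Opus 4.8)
The plan is to establish the conjecture by reducing an arbitrary flow-admissible signed graph to a cubic, cyclically $5$-edge-connected one and handling that case directly; a nowhere-zero $6$-flow is then pulled back along the reductions. \emph{First}, I would reduce to cubic signed graphs. Given a flow-admissible signed graph $G$, I would suppress each vertex of degree two by contracting one incident edge (multiplying the two signs), and split each vertex of degree $d \ge 4$ into a small tree of degree-three vertices joined by \emph{positive} edges. Since a nowhere-zero $6$-flow on the split graph restricts to one on $G$ after contracting the inserted gadgets, it suffices to choose the splittings so that the resulting cubic graph is again flow-admissible; this can be arranged using the known combinatorial characterization of flow-admissibility.

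\emph{Second}, I would reduce the connectivity by induction on the number of edges. If the cubic flow-admissible signed graph has a cyclic edge-cut of size at most four, I would separate it along this cut, cap each of the two sides with a bounded-size signed gadget that restores flow-admissibility, apply the inductive hypothesis to each smaller instance, and glue the two flows into a single nowhere-zero $6$-flow on the original graph. Repeating until no such cut remains leaves a cubic signed graph that is either small enough to check directly or cyclically $5$-edge-connected.

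\emph{Third}, for the base case of a cyclically $5$-edge-connected cubic signed graph I would exploit the rigid local structure. Using that every bridgeless cubic graph has a perfect matching, I would form signed circuits from a $2$-factor together with selected matching edges, pairing unbalanced cycles through connecting paths as the signed setting requires, route a nowhere-zero $\mathbb{Z}_3$-flow on this structure, and correct the remaining edges with a $\mathbb{Z}_2$-flow, assembling a nowhere-zero $\mathbb{Z}_2 \times \mathbb{Z}_3 \cong \mathbb{Z}_6$ flow in the spirit of Seymour's six-flow theorem. Because integer flows and group flows do not coincide for signed graphs, the final step is to convert this group flow into an \emph{integer} $6$-flow, using the high cyclic connectivity to control the contribution of the order-two element and of the negative edges.

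The main obstacle is the connectivity reduction of the second step. Unlike the unsigned setting, gluing flows across a small cut must respect both the signs and the directions of the bidirected orientation, so the flow values on the cut edges must agree under a signed conservation law; moreover flow-admissibility can be destroyed when a cut separates a balanced region from an unbalanced one. Guaranteeing that the inductive decomposition always yields flow-admissible pieces with compatible boundary data across every cyclic cut of size at most four is the crux, and it is precisely this difficulty that makes the general conjecture hard while leaving the cyclically $5$-edge-connected cubic case within reach.
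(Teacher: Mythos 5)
You have set out to prove the full statement, but the statement is an open conjecture: the paper proves only the special case of cyclically $5$-edge-connected cubic signed graphs, and explicitly leaves the general conjecture open. The fatal gap in your plan is the second step. There is no known (and the paper provides no) gadget-and-glue argument across cyclic edge-cuts of size at most four in the signed setting, and your sketch does not supply one. In the unsigned setting such reductions work because one can contract one side of a small cut, find a flow on the other, and then re-route flow along circuits to match boundary values; for signed graphs the circuit space is different (signed circuits include barbell-type configurations), the two pieces' nowhere-zero $6$-flows impose values in $\{\pm1,\dots,\pm5\}$ on up to four cut edges that must agree under the bidirected conservation law, and capping a side can destroy flow-admissibility when the cut separates balanced from unbalanced parts --- a point you raise yourself. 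Since you concede in your final paragraph that this step is ``the crux'' and is unresolved, the proposal is a research program, not a proof: conceding the central difficulty leaves the argument with no content beyond the cases already known.

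Your third step also falls short of the paper's actual argument for the base case, in an instructive way. A Seymour-style assembly of a nowhere-zero $\mathbb{Z}_2\times\mathbb{Z}_3$-flow is not the bottleneck: DeVos, Li, Lu, Luo, Zhang, and Zhang already proved that \emph{every} flow-admissible signed graph has such a group flow, even one in which $\supp(\phi_2)$ contains an even number of negative edges (Theorem \ref{11f}). The genuine difficulty, which your phrase ``control the contribution of the order-two element'' leaves as a placeholder, is that for signed graphs a nowhere-zero $A$-flow does \emph{not} convert to a nowhere-zero $|A|$-integer flow. The paper's contribution is precisely this conversion: using cubicness and switching, the $\mathbb{Z}_6$-flow is normalized so that every vertex is a source with boundary $6$ or a near-source with boundary $0$; a mod-$4$ counting argument exploiting the evenness condition on negative edges shows the number of sources is even (Lemma \ref{sZ6}); and then, after contracting positive edges, cyclic $5$-edge-connectivity guarantees a perfect matching (via Tutte's theorem applied to a $6$-regular, $5$-edge-connected auxiliary multigraph) along which values $\phi(e)$ are flipped to $6-\phi(e)$, producing an integer $6$-flow (Lemma \ref{Z6 to 6}). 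Your first step, by contrast, is sound and matches the paper's folklore reduction to cubic graphs via uncontraction (Theorem \ref{red to cubic}), but steps two and three as written do not close the argument.
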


Bouchet showed that his conjecture is true with 6 replaced by 216, and demonstrated that if true it is best possible. The current best result toward Conjecture \ref{bcon} is by DeVos, Li, Lu, Luo, Zhang, and Zhang who showed in 2021 that 6 can be replaced with 11 \cite{DLLZZ}. 

Many results toward this conjecture have been obtained under some connectivity assumptions, and we list some chronologically. In 1987, Khelladi showed that the 6 can be replaced with 18 under the assumption that $G$ is 4-connected \cite{Khelladi}. In 2005, Xu and Zhang proved that  Bouchet's conjecture is true for 6-edge-connected signed graphs \cite{XU2005335}. In 2011, Raspaud and Zhu demonstrated that a flow-admissible, 4-edge-connected signed graph has a nowhere-zero 4-flow \cite{RASPAUD2011464}. In 2014, Wu, Ye, Zang and Zhang showed that a flow-admissible, 8-edge-connected signed graph has a nowhere-zero 3-flow \cite{3flow8ec}. It is folklore that to prove Bouchet's Conjecture, it suffices to consider cubic signed graphs (since we could find no reference, we include a proof in Section \ref{S2} for completeness). Recently, DeVos, \v{S}\'{a}mal, and this author showed that a flow-admissible, 3-edge-connected signed graph has a nowhere-zero 8-flow \cite{devos2023nowherezero8flowscyclically5edgeconnected, myThesis}. In this paper we give another connectivity-based result that allows vertices of degree three -- we prove that Bouchet's Conjecture holds for cyclically 5-edge-connected cubic signed graphs. 

A graph is \emph{$k$-regular} when every vertex has degree $k$. A 3-regular graph is called \emph{cubic}, and a connected, 2-regular graph is called a \emph{cycle}. A graph is \emph{cyclically $k$-edge-connected} when every edge-cut separating two cycles contains at least $k$ edges.

\begin{theorem}\label{mainCyc5ecBou}
    Every flow-admissible, cyclically 5-edge-connected, cubic signed graph has a nowhere-zero 6-flow.
\end{theorem}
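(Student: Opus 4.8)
The plan is to build the flow directly as an integer combination of two simpler signed flows. Suppose for contradiction that $(G,\sigma)$ is a counterexample, chosen minimal. Since $G$ is cubic, cyclic $5$-edge-connectivity forces $G$ to be $3$-edge-connected, and in particular bridgeless, so by Petersen's theorem $G$ has a perfect matching $M$; write $F=E(G)\setminus M$ for the complementary $2$-factor, a disjoint union of circuits. After an initial switching I may also assume the signature is in a convenient form. The target is a pair of signed flows: an integer $3$-flow $g$ whose support $B=\supp(g)$ contains all of the unbalanced circuits, and an integer $2$-flow $h$ whose support $A=\supp(h)$ is a \emph{balanced} even subgraph with $A\cup B=E(G)$. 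Given such $g$ and $h$, the combination $f=3h+g$ is a nowhere-zero integer $6$-flow: on an edge where only $h$ is nonzero $f=\pm3$, where only $g$ is nonzero $f\in\{\pm1,\pm2\}$, and where both are nonzero $f\in\{\pm1,\pm2,\pm4,\pm5\}$; since $A\cup B=E(G)$ no edge gets value $0$.

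The first half of the work is the construction of $g$. A balanced circuit carries an integer flow by itself, but an unbalanced circuit carries none; the elementary integer $3$-flows that involve unbalanced circuits are \emph{barbells} -- two unbalanced circuits joined by a path, carrying the values $1,2,1$ and hence lying in $\{1,2\}$. Flow-admissibility guarantees that every edge lies in a balanced circuit or a barbell, and I would use it to pair the unbalanced circuits of $F$ into edge-disjoint barbells, routing the connecting paths through matching edges. Summing these barbell flows (together with flows on some edge-disjoint balanced circuits) produces an integer $3$-flow $g$ whose support $B$ absorbs all of the unbalancedness of $(G,\sigma)$.

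The second half, and the main obstacle, is to show that the leftover edges $E(G)\setminus B$ can be completed to a \emph{balanced} even subgraph $A$. This is exactly where the value $3=-3$, the order-two element of $\mathbb{Z}_6$, makes the problem genuinely signed: an unbalanced circuit carrying the value $3$ admits no integer lift (around it the signed Kirchhoff law forces $2x=0$, hence $x=0$), which is why it is essential that $h$ -- the part of the flow equal to $\pm 3$ -- be supported on a balanced subgraph, and why every unbalanced circuit must be pushed into $g$. Completing $E(G)\setminus B$ to an even subgraph is a $T$-join problem, and keeping the completion balanced while correcting all parities is constrained by the edge cuts of $G$: a small cut isolating an odd or otherwise obstructing configuration of unbalanced circuits would block the pairing in the first half, and a cut across which the required $T$-join cannot be completed inside the balanced part would block the second. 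I expect the crux of the proof to be showing that cyclic $5$-edge-connectivity rules out precisely these obstructions -- that forbidding cyclically $\le 4$ cuts supplies enough edge-disjoint routing to realise all the barbell pairings simultaneously and to balance the residual parities -- and that this is exactly the additional strength that lets one improve the earlier $8$-flow bound down to $6$. Once $g$ and $h$ are in hand, $f=3h+g$ contradicts the choice of counterexample.
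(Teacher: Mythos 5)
Your proposal is a plan in the style of Seymour's 6-flow theorem (a $\mathbb{Z}_2\times\mathbb{Z}_3$-type decomposition $f=3h+g$), and the arithmetic at its core is fine: if $g$ is an integer $3$-flow, $h$ an integer $2$-flow, and $\supp(g)\cup\supp(h)=E(G)$, then $3h+g$ is a nowhere-zero $6$-flow. But the two constructions that would make this work are exactly the parts you do not carry out, and they are the entire content of the theorem. For $g$, you write that you ``would use'' flow-admissibility to pair the unbalanced circuits of the $2$-factor $F$ into edge-disjoint barbells. No argument is given, and as stated the step has concrete problems: the number of unbalanced circuits of $F$ can be odd, so they cannot be perfectly paired; a connecting path ``routed through matching edges'' can only be a single matching edge (any longer path must use edges of other circuits of $F$), so in general the barbells must invade other circuits of $F$, threatening both edge-disjointness and the claim that the leftover edges form something balanced. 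For $h$, you explicitly defer the construction (``I expect the crux of the proof to be showing that cyclic $5$-edge-connectivity rules out precisely these obstructions''); expecting an obstruction analysis to work is not a proof, and no mechanism is proposed for how cyclic $5$-edge-connectivity would be invoked. You also set up a minimal counterexample but never use minimality. (A smaller point: requiring $\supp(h)$ to be balanced is sufficient but not actually necessary -- e.g.\ two unbalanced circuits meeting at a vertex support a $\pm1$ integer flow -- so the constraint you call ``essential'' is stronger than needed, which only makes your missing step harder.)

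For contrast, the paper does not attempt a Seymour-style decomposition at all. It starts from the theorem of DeVos, Li, Lu, Luo, Zhang, and Zhang that a flow-admissible signed graph has a nowhere-zero $\mathbb{Z}_2\times\mathbb{Z}_3$-flow with an even number of negative edges in the support of the $\mathbb{Z}_2$-part; on a cubic graph this is normalized (by reversals and switchings) into an orientation with all values in $\{1,2,3\}$, every vertex a source with boundary $6$ or a near-source with boundary $0$, and -- via a parity count over negative edges -- an even number of sources. The conversion to an integer $6$-flow is then done by contracting positive edges and finding a perfect matching in an auxiliary $6$-regular, $5$-edge-connected multigraph (Tutte's theorem); reversing the matching edges and replacing $\phi(e)$ by $6-\phi(e)$ on them kills the boundary defects. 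Cyclic $5$-edge-connectivity enters only to guarantee that matching. So your route is genuinely different in strategy, but in its current form it is an outline with the two central lemmas missing, not a proof.
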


Our process can roughly be described as follows: A flow-admissible, cyclically 5-edge-connected cubic signed graph $G$ has a special nowhere-zero $\Z_6$-flow by a theorem of DeVos et al. \cite{DLLZZ}. Because $G$ is cubic, without loss we may assume every vertex has at most one edge carrying flow into it, and boundary in $\{0,6\}$. Preserving these properties, we contract all positive edges. A perfect matching in the contracted graph converts the $\Z_6$-flow to a 6-flow, which yields a 6-flow in $G$. The assumption that the graph is cyclically 5-edge-connected is only used to find the perfect matching -- we believe, by extending the method described here, that assumption could be reduced.

We cover basics in Section \ref{S2}, including defining signed graphs, and the folkloric result reducing Bouchet's conjecture to the cubic case. Definitions are introduced throughout, as they are needed. In Section \ref{S3}, we provide a lemma to convert a $\Z_6$-flow to a 6-flow under certain circumstances, and prove our main result.

\section{Basics}\label{S2}

\subsubsection*{Signed graphs}

All graphs considered are finite, and we allow parallel edges and loops. We use \cite{BondyJ.A.JohnAdrian2008Gt/J} for terms not defined here. Let $G=(V,E)$ be a graph. We denote the collection of edges incident to a vertex $v \in V$ by $\delta(v)$, with the convention that a loop incident to $v$ appears twice in $\delta(v)$. The \emph{degree} $d(v)$ of $v$ is then $|\delta(v)|$.  
A \emph{signed graph} is a graph $G$ with a \emph{signature} $\Sigma \subseteq E(G)$. We call an edge in $\Sigma$ \emph{negative}, and an edge in $E(G) \setminus \Sigma$ \emph{positive}. To orient a signed graph, intuitively we assign a direction exactly twice to every edge, so that every positive edge is oriented both toward an end and away from an end, while every negative edge is oriented either toward all, or away from all of its ends. Thus, an \emph{oriented} signed graph, also called a \emph{bidirected graph}, is a signed graph $G$ with an \emph{orientation}: a bipartition\footnote{with possibly empty parts} of $\delta(v)$ into $\delta^+(v)$ and $\delta^-(v)$ for every $v \in V(G)$ so that $\Delta_{v \in V(G)}\delta^+(v)$ is the set of positive edges of $G$\footnote{$\Delta$ denotes symmetric difference}\footnote{A positive loop-edge incident to $v$ occurs once in $\delta^+(v)$ and once in $\delta^-(v)$, while a negative loop-edge incident to $v$ occurs twice in one of $\delta^+(v)$ or $\delta^-(v)$.}. We say an edge in $\delta^+(v)$ is directed \emph{away} from $v$, while an edge in $\delta^-(v)$ is directed \emph{toward}~$v$.
\medskip

Given an oriented signed graph $G$, an additively-written abelian group $A$, and a function $f: E(G) \to A$, we define the \emph{boundary} $\partial f: V(G) \to A$ as $\partial f(v) = \sum_{e \in \delta^+(v)} f(e) - \sum_{e \in \delta^-(v)} f(e)$. One may think of $\partial f$ as being the deficit of $f$ at each vertex. We say that $f$ is an $A$-\emph{flow} or just a \emph{flow} whenever $\partial f = 0$, we say that $f$ is \emph{nowhere-zero} whenever $0 \not\in f(E)$, and we say that $f$ is a \emph{$k$-flow} if it is an integer-flow where $f(E) \subseteq \{0, \pm1, \pm2, \dots, \pm(k-1)\}$.

\medskip

In the next two paragraphs, we define two well-used operations on an oriented signed graph $G$: reversing an edge, which involves the orientation but not the signature of $G$; and switching at a vertex, which involves both the signature and (if present) the orientation of $G$. 

Let $e \in E(G)$ be an edge, and $v$ be an end of $e$. If $e \in \delta^+(v)$ (respectively $e \in \delta^-(v)$) then to \emph{reverse $e$ at $v$} is to remove $e$ from $\delta^+(v)$ (resp. $\delta^-(v)$) and add $e$ to $\delta^-(v)$ (resp. $\delta^+(v)$). To \emph{reverse the edge $e$} (when no end is specified) means to reverse $e$ at each of its ends\footnote{If $e$ is a loop-edge incident to $v$, then by ``reversing $e$ at $v$'' and ``reversing $e$'' we mean the same operation: If $e$ is a positive loop, we do nothing. If $e$ is a negative loop with both ends toward (away from) $v$, then we modify the orientation so that $e$ is a negative loop with both ends away from (toward)~$v$.}. Notice that if $f: E(G) \to \Z$ is a nowhere-zero $k$-flow in an oriented signed graph $G$, then the oriented signed graph $G'$ formed from $G$ by reversing some $e\in E(G)$ also has a nowhere-zero $k$-flow $f'$, where $f'(d) = f(d)$ if $d \neq e$, and $f'(e) =-f(e)$.
Thus, to demonstrate the existence of a nowhere-zero $k$-flow in an oriented signed graph $G$, it suffices to demonstrate a nowhere-zero $k$-flow in any orientation of $G$. For simplicity, we do not always specify an orientation of a signed graph $G$ when speaking of a flow $f$ on its edges, the understanding, however, is that the reader may choose any arbitrary orientation\footnote{This generalizes to any group-valued function on the edges of a signed graph with prescribed boundary at each vertex)}.

Consider again the oriented signed graph $G$, and let $v \in V(G)$. To \emph{switch} at $v$ is to reverse at $v$ every edge incident to $v$, and simultaneously adjust the signature of $G$ so that every negative (positive) non-loop edge in $\delta(v)$ becomes positive (negative). 
Notice that if $G$ has a nowhere-zero $k$-flow $f$, then after switching at $v$, the function $f$ remains a nowhere-zero $k$-flow in the modified graph. In fact, the switching operation is fundamental to signed graphs, and we say two signed graphs $G$ and $G'$ are \emph{equivalent} if $G'$ can be obtained from $G$ by a sequence of switching operations. If $\Sigma$ is the signature of $G$, and $\Sigma'$ is the signature of $G'$, then we also say $\Sigma$ and $\Sigma'$ are equivalent signatures of $G$. To demonstrate the existence of a nowhere-zero $k$-flow in a signed graph $G$, it suffices to demonstrate a nowhere-zero $k$-flow in any signed graph $G'$ that is equivalent to $G$\footnote{This generalizes to any group-valued function on the edges of a signed graph with prescribed boundary \emph{up to inverse} at each vertex.}.

A signed graph $G$ is \emph{$k$-unbalanced} whenever every signed graph $G'$ equivalent to $G$ has at least $k$ negative edges. A $1$-unbalanced signed graph is simply \emph{unbalanced}. If a signed graph  is not unbalanced, it is \emph{balanced}.
We say a (signed) graph $G$ is \emph{flow-admissible} whenever there exists a function $f: E(G) \to \Z$ that is a nowhere-zero integer-flow. Bouchet \cite{Bouchet} proved that this is equivalent to the statement that for every edge $e \in E(G)$, $G-e$ has the same number of balanced components as $G$. 

\subsubsection*{Reducing Bouchet's conjecture to cubic signed graphs}

It is known that to prove Bouchet's conjecture it suffices to consider cubic signed graphs, however, since we could not find a proof in the literature we provide one for completeness.

Let $G$ be a signed graph with a vertex $v \in V(G)$ of degree two, and let $e,e' \in \delta(v)$ be distinct edges, whose other ends are $u,u'$. Then to \emph{suppress} $v$ is to delete $v$ from $V(G)$ and add a new edge $f$ to $E(G)$ with ends $u,u'$ so that $f$ is negative if and only if exactly one of $e,e'$ is negative.

\begin{observation}\label{deg2}
    Let $G$ be a signed graph with a vertex $v \in V(G)$ of degree two that is not incident to a loop. Then the graph formed from $G$ by suppressing $v$ has a nowhere-zero $k$-flow if and only if $G$ has a nowhere-zero $k$-flow.
\end{observation}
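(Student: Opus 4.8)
The plan is to exhibit, after a little normalization, a value-preserving bijection between nowhere-zero $k$-flows of $G$ and of the suppressed graph $G'$. Write $\delta(v)=\{e,e'\}$, let $u,u'$ be the ends of $e,e'$ other than $v$, and let $f$ be the new edge of $G'$ joining $u$ and $u'$; recall that $f$ is negative exactly when one of $e,e'$ is negative. The edge set of $G'$ is $(E(G)\setminus\{e,e'\})\cup\{f\}$, and every vertex other than $v$ retains its incident edges except that $\{e,e'\}$ is replaced by $\{f\}$ at $u,u'$. So the only work is to understand how a flow behaves near $v$.

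First I would cut down the number of sign cases. Since switching at $v$ flips the signs of both $e$ and $e'$ (neither is a loop) while preserving the existence of nowhere-zero $k$-flows and leaving $G'$ unchanged (switching at $v$ alters only $e,e'$, which suppression deletes), I may assume up to switching that either both of $e,e'$ are positive, in which case $f$ is positive, or exactly one of them, say $e'$, is negative, in which case $f$ is negative. Because reversing an edge likewise preserves nowhere-zero $k$-flows in both $G$ and $G'$, I am further free to fix convenient orientations.

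Now I would set the orientations so that flow conservation at $v$ reduces to the single equation $f(e)=f(e')$. In the first case orient $e\in\delta^+(v)$ and $e'\in\delta^-(v)$; as both edges are positive this forces $e\in\delta^-(u)$ and $e'\in\delta^+(u')$, and $\partial f(v)=f(e)-f(e')$. In the second case orient $e\in\delta^+(v)$ (positive) and $e'\in\delta^-(v)$ with $e'$ pointing toward both ends (negative), so $e\in\delta^-(u)$ and $e'\in\delta^-(u')$, and again $\partial f(v)=f(e)-f(e')$. Given a flow $f$ on $G$, conservation at $v$ yields $f(e)=f(e')$, so I define $f'$ on $G'$ to agree with $f$ off $\{e,e'\}$ and set $f'(f)=f(e)=f(e')$; I orient $f$ as a positive edge with $f\in\delta^-(u)\cap\delta^+(u')$ in the first case, and as a negative edge with $f\in\delta^-(u)\cap\delta^-(u')$ in the second. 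A direct check then shows $\partial f'$ agrees with $\partial f$ at $u$ and at $u'$ (and trivially elsewhere), so $f'$ is a flow, and it is nowhere-zero and $k$-bounded precisely when $f$ is, since the only new value is $f'(f)=f(e)$. Reading the same recipe backwards splits $f$ into $e,e'$ with common value $f'(f)$ and turns a nowhere-zero $k$-flow on $G'$ into one on $G$, giving the asserted equivalence.

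The verification is entirely routine; the only point that demands care is the bidirected bookkeeping, especially the degenerate case $u=u'$, where $f$ becomes a loop. There one checks that a positive loop contributes $0$ to $\partial f'(u)$, matching the combined contribution $-f(e)+f(e')=0$ of $e,e'$, while a negative loop with both ends toward $u$ contributes $-2f'(f)$, matching the combined contribution $-2f(e)$ of $e,e'$ in $G$; so the correspondence survives. I expect no genuine obstacle beyond keeping the sign and orientation conventions straight across these cases.
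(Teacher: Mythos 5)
Your proof is correct: the switching-at-$v$ normalization, the choice of orientations making conservation at $v$ read $f(e)=f(e')$, the boundary check at $u,u'$, and the separate treatment of the loop case $u=u'$ are all sound under the paper's bidirected conventions. The paper states this as an Observation with no proof at all, and your argument is precisely the routine local flow-correspondence the paper leaves implicit, so there is nothing to contrast.
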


Let $G$ be a signed graph, $v \in V(G)$ have $d(v)\ge 4$, and $e_1,e_2 \in \delta(v)$. Let $v_1$ and $v_2$ be the other ends of $e_1$ and $e_2$ respectively (we allow loops). To \emph{uncontract at $v$ with $e_1,e_2$} is to add a new vertex $v'$ to $V(G)$, change the ends of $e_1$ and $e_2$ to be $v',v_1$ and $v',v_2$ respectively, and add a new positive edge with ends $v,v'$.

\begin{figure}[h]
    \centering
    \includegraphics[width=0.5\linewidth]{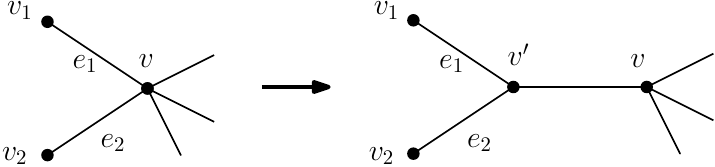}
    \label{fig:placeholder}
\end{figure}

\begin{lemma}\label{uncont}
    Let $G$ be a flow-admissible signed graph, $v \in V(G)$ be a vertex of degree at least 4, and $e,e' \in \delta(v)$. Let $G'$ be the signed graph obtained from $G$ by uncontracting at $v$ with $e,e'$, and let $f$ be the uncontracted edge. Then one of $G'$ or $G' -f$ is flow-admissible.
\end{lemma}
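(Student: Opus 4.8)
The plan is to work directly with the flow definition of flow-admissibility rather than with Bouchet's balanced-component criterion, since uncontracting interacts cleanly with the boundary operator $\partial$. The key observation is that $G$ is recovered from $G'$ by contracting the positive edge $f$, and that uncontracting only splits the vertex $v$ into $v$ and $v'$ while leaving every edge, and its orientation at each surviving end, untouched. So I would start from a nowhere-zero integer flow $\phi$ on $G$, which exists because $G$ is flow-admissible, and attempt to transport it either to $G'$ or to $G'-f$.

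Fix an orientation of $G$ and let $\phi \colon E(G) \to \Z$ be a nowhere-zero flow. Define the partial sum $s$ to be the combined signed contribution of $e$ and $e'$ to the boundary at $v$; that is, $s = \varepsilon_e\,\phi(e) + \varepsilon_{e'}\,\phi(e')$, where $\varepsilon_d = +1$ if $d \in \delta^+(v)$ and $\varepsilon_d = -1$ if $d \in \delta^-(v)$. Writing $T$ for the contribution of the remaining edges of $\delta(v)$, the hypothesis $\partial\phi(v) = 0$ gives $T = -s$. When we uncontract, the darts of $e$ and $e'$ that were at $v$ are moved unchanged to $v'$, so in $G'$ the edges $e,e'$ contribute exactly $s$ to the boundary at $v'$, while $T = -s$ remains the contribution of the edges still at $v$.

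Now I would split into two cases according to whether $s$ vanishes. If $s \neq 0$, I orient the new positive edge $f$ so that its dart at $v'$ is outgoing (hence its dart at $v$ is incoming) and set $\phi'(f) = -s$, keeping $\phi' = \phi$ on every other edge. Then the boundary at $v'$ is $s + \phi'(f) = 0$ and the boundary at $v$ is $T - \phi'(f) = -s + s = 0$, while all other boundaries are unaffected; since $-s \neq 0$ this $\phi'$ is nowhere-zero, so $G'$ is flow-admissible. If instead $s = 0$, then $f$ is not needed: the restriction of $\phi$ to $E(G'-f) = E(G)$ already satisfies conservation at $v'$ (contribution $s = 0$) and at $v$ (contribution $T = -s = 0$) and is unchanged elsewhere, so it is a nowhere-zero flow and $G'-f$ is flow-admissible. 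In either case one of $G', G'-f$ is flow-admissible, as claimed.

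The argument is short, and I expect the only delicate point to be the bookkeeping of darts when $e$ or $e'$ is a loop at $v$: there one must specify which end is moved to $v'$, but since the boundary contribution of each dart is transported unchanged, the quantity $s$ and the two displayed conservation identities remain valid verbatim. I would also remark that the hypothesis $d(v) \ge 4$ is not actually used to make $\phi'$ a flow; it matters only for the intended application, where one repeatedly uncontracts to reduce to the cubic case. Indeed, for $d(v) = 2$ the computation forces $s = 0$, recovering the expected conclusion that $G'-f$, with $v$ now isolated, is the flow-admissible option.
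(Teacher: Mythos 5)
Your proof is correct and takes essentially the same route as the paper's: restrict the nowhere-zero integer flow of $G$ to $G' - f$, and either it is still a flow (so $G'-f$ is flow-admissible) or the boundary discrepancy at $v$ and $v'$ (your $s$) is nonzero and can be absorbed by orienting $f$ suitably and assigning it flow value of magnitude $|s|$, making $G'$ flow-admissible. Your explicit dart bookkeeping, loop discussion, and the remark that $d(v)\ge 4$ is not needed are just a more detailed rendering of the paper's two-line argument.
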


\begin{proof}
    Because $G$ is flow-admissible, there exists a nowhere-zero integer flow $\phi: E(G) \to \Z$. If $\phi$ is also a nowhere-zero integer flow in $G'-f$, then we are done. If not, then $ 0 \neq \partial \phi_{G'-f} (v) = -\partial \phi_{G'-f}(v')$, say this value is $a$. But then we may extend $\phi$ to a nowhere-zero integer flow in $G'$ by setting $|\phi(f)| = a$.
\end{proof}

\begin{theorem}[folklore]\label{red to cubic}
    Let $k \ge 2$ be an integer. If $G$ is a flow-admissible signed graph without a nowhere-zero $k$-flow such that $\sum_{v \in V(G)}|d(v) - 2.5|$ is minimum, then $G$ is cubic.
\end{theorem}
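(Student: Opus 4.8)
The plan is to run the standard minimal-counterexample argument, reading $\Phi(G) := \sum_{v \in V(G)} |d(v) - 2.5|$ as a potential to be decreased. The key observation is that $|d - 2.5|$ attains its minimum value $0.5$ exactly at $d \in \{2,3\}$, so if $G$ is not cubic it has a vertex of degree $0$, $1$, $2$, or at least $4$. I would rule out each possibility in turn by exhibiting, from such a vertex, a \emph{flow-admissible} signed graph that \emph{still} has no nowhere-zero $k$-flow but strictly smaller potential, contradicting the minimality of $\Phi(G)$. Throughout, the recurring subtlety is not lowering $\Phi$ (that is a short arithmetic check) but verifying that each reduced graph remains a genuine counterexample, i.e. that a nowhere-zero $k$-flow on it would lift back to one on $G$.

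The degrees $0$ and $1$ are disposed of directly. A degree-$1$ vertex $v$ meets a single non-loop edge, and conservation of any flow at $v$ forces that edge to carry value $0$; hence $G$ would not be flow-admissible, contradicting the hypothesis with no appeal to minimality. An isolated vertex is a trivial component irrelevant to all flows, so deleting it preserves both flow-admissibility and the absence of a nowhere-zero $k$-flow while dropping $\Phi$ by $2.5$. For a vertex $v$ of degree $2$ I would split on whether $v$ meets a loop. If it does, then $v$ with that loop is an entire component: a negative loop admits no nowhere-zero flow (so $G$ is not flow-admissible), while a positive loop is a flow-admissible component whose deletion lowers $\Phi$ by $0.5$ and leaves the flow status of the remainder of $G$ unchanged. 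If $v$ meets no loop, I would suppress it; Observation~\ref{deg2} guarantees that the suppressed graph has a nowhere-zero $k$-flow if and only if $G$ does (so it is still a counterexample and stays flow-admissible), and suppression removes $v$ without altering any other degree, so $\Phi$ falls by exactly $0.5$. The point of suppressing here, rather than merely comparing potentials, is that $\Phi$ does not distinguish degree $2$ from degree $3$, so I must actually eliminate the vertex.

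The main work, and the step I expect to be the real obstacle, is the degree-$\ge 4$ case, which is where Lemma~\ref{uncont} enters. Given $v$ with $d(v) = d \ge 4$, pick any $e, e' \in \delta(v)$ and uncontract at $v$ with $e,e'$ to obtain $G'$ with a new degree-$3$ vertex $v'$ and a new positive edge $f = vv'$. A short computation gives $\Phi(G') = \Phi(G) - 0.5$ and $\Phi(G'-f) \le \Phi(G) - 0.5$ (indeed $\Phi(G'-f) = \Phi(G) - 1.5$ once $d \ge 5$), so whichever of $G'$ or $G'-f$ is flow-admissible—one of them is, by Lemma~\ref{uncont}—has strictly smaller potential. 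The delicate part is confirming it is still a counterexample: a nowhere-zero $k$-flow on $G'$ descends to one on $G$ by contracting the positive edge $f$, and a nowhere-zero $k$-flow on $G'-f$ descends to one on $G$ by identifying $v$ with $v'$; both operations preserve the boundary-zero condition and keep every edge value nonzero. In either case $G$ would inherit a nowhere-zero $k$-flow, a contradiction. Hence no vertex has degree $\ge 4$, and together with the earlier cases every vertex has degree exactly $3$, so $G$ is cubic.
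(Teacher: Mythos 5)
Your proof is correct and takes essentially the same route as the paper: a minimal-counterexample argument on the potential $\sum_{v}|d(v)-2.5|$, ruling out degree $1$ by flow-admissibility, degree $2$ via Observation~\ref{deg2}, and degree at least $4$ via uncontraction and Lemma~\ref{uncont}, with the $k$-flow on the reduced graph descending to $G$ by restriction. The only difference is expository: you spell out the isolated-vertex and looped-degree-$2$ cases and the potential arithmetic that the paper compresses into ``by minimality and Observation~\ref{deg2}, $G$ is connected and has no vertex of degree two.''
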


\begin{proof} Because $G$ is flow-admissible, $G$ has no vertex of degree one. By minimality and Observation \ref{deg2}, $G$ is connected and has no vertex of degree two. Suppose, for contradiction, that $G$ has a vertex $v \in V(G)$ of degree at least 4, and let $e,e' \in \delta(v)$. Let $G'$ be the signed graph obtained from $G$ by uncontracting at $v$ with $e,e'$, and let $f$ be the uncontracted edge. By Lemma \ref{uncont}, one of $G'$ or $G' -f$ is flow-admissible, call it $G^*$. By minimality, $G^*$ has a nowhere-zero $k$-flow $\phi_k$. But $\phi_k$ restricted to $E(G)$ is a nowhere-zero $k$-flow in $G$, a contradiction. Thus $G$ has no vertex of degree at least 4, and so $G$ is cubic.
\end{proof} 

\section{Proof of main result}\label{S3}

In this section, we begin with a method to convert a $\Z_6$-flow to a 6-flow under specific circumstances in Lemma \ref{Z6 to 6}. Then we show that those circumstances can be achieved when the graph is cubic in Lemma \ref{sZ6}. Combining these lemmas, we prove Theorem \ref{mainCyc5ecBou}. 

If $e$ is a positive edge in a signed graph $G$, then we \emph{contract} $e$ by identifying its ends and deleting $e$ from $E(G)$. We denote the resulting graph by $G/ e$. A vertex $v \in V(G)$ is a \emph{source} if $\delta^-(v) = \emptyset$, and is a \emph{near-source} if $|\delta^-(v)| = 1$.

\begin{lemma}\label{Z6 to 6}
    Let $G$ be a cyclically 5-edge-connected signed graph with minimum degree 3. Suppose $G$ has an orientation $\tau$, and a nowhere-zero function $\phi: E(G) \to \Z$ such that (under orientation $\tau$)
    \begin{enumerate}
        \item $\phi(e) \in \{1,2,3,4,5\}$ for all $e \in E(G)$,
        \item $\partial \phi(v) \in \{0,6\}$ for all $v \in V(G)$,
        \item if $\partial \phi (v) = 6$ then $v$ is a source, if $\partial \phi(v) = 0$ then $v$ is a near-source, and
        \item there are an even number of source vertices.
    \end{enumerate}
    Then $G$ has an orientation $\tau^*$ where every vertex is a near-source, and a nowhere-zero 6-flow $\psi^*$ so that when $G$ has orientation $\tau^*$, then $\psi^*: E(G) \to \{1,2,3,4,5\}$.
\end{lemma}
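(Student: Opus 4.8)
The plan is to build $\psi^*$ from $\phi$ by one reversible move. Call \emph{complementing} an edge $e$ the operation of reversing $e$ and simultaneously replacing $\phi(e)$ by $6-\phi(e)$; since $\phi(e)\in\{1,\dots,5\}$, the new value again lies in $\{1,\dots,5\}$. A direct check shows that complementing $e$ adds $-6$ to $\partial\phi$ at each end from which $e$ points away and $+6$ at each end toward which $e$ points, that complementing is an involution, and that the effects of complementing distinct edges are independent and additive. Hence complementing a set $F\subseteq E(G)$ is well defined, and the lemma reduces to exhibiting an $F$ whose complementation changes $\partial\phi$ by exactly $-6$ at each source and by $0$ at each near-source: by conditions 1--3 the resulting function then has boundary $0$ everywhere and all values in $\{1,\dots,5\}$, i.e.\ is a nowhere-zero $6$-flow $\psi^*$ for the resulting orientation $\tau^*$.

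First I would record what these requirements say about $F$ combinatorially. At a source all incident edges point away, so the required change $-6$ forces \emph{exactly one} incident edge to lie in $F$. At a near-source, writing $e_0$ for its unique in-edge, the change $0$ forces the number of its out-edges in $F$ to equal the indicator $[e_0\in F]$; thus a near-source meets $F$ in $0$ edges or in $e_0$ together with exactly one out-edge. Following the in-degree through these same cases shows that after complementing $F$ \emph{every} vertex has in-degree exactly $1$, so the ``every vertex is a near-source'' conclusion comes for free once the boundary conditions hold. In graph terms $F$ is a disjoint union of paths and cycles whose degree-$1$ vertices are exactly the sources, each interior vertex being met by one in-edge and one out-edge of $F$; in particular the paths of $F$ pair up the sources.

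Next I would normalise and contract. Because each vertex has at most one in-edge (condition 3), any two negative edges pointing toward both of their ends are vertex-disjoint, so I can complement all of them simultaneously; this turns each into an edge pointing away from both ends and, by the boundary rule, promotes its two ends to sources, preserving conditions 1--4. Now every in-edge is positive, and contracting all positive edges yields a graph $H$ whose vertices are the positive components; each component carries boundary $6$ times the number of sources it contains. I would reduce to the case that this number is always $1$---ruling out or excising the sourceless, balanced components of boundary $0$, which must be left untouched---so that every component is an out-arborescence rooted at its unique source, with all incident negative edges pointing outward. A negative edge joining two components then yields a valid pairing path: from each end climb the tree-path to the root, using at each interior vertex the parent edge as in-edge and the child edge (or the negative edge) as out-edge. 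Distinct negative edges give internally disjoint such paths, so a perfect matching of $H$ produces an admissible set $F$, reducing the lemma to finding a perfect matching of $H$.

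The existence of this perfect matching is where the connectivity hypothesis is used and is the step I expect to be hardest. Condition 4 makes the number of sources, hence the number of vertices of $H$, even, supplying the parity Tutte's theorem needs. To verify Tutte's condition $o(H-S)\le|S|$, I would take a hypothetical violating set $S$ and study the odd components of $H-S$: the negative edges leaving a union of such components form an edge-cut of $G$, and cyclic $5$-edge-connectivity forbids a cut of fewer than five edges from separating two cycles. This pins down any component of $H-S$ joined to the rest by few edges---it must be acyclic in $G$, hence small and of bounded degree---and a counting argument over these cuts, using the minimum-degree-$3$ hypothesis, should bound the number of odd components by $|S|$ and so deliver the matching.
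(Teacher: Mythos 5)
Your first two stages are sound and essentially parallel the paper's proof, just organized globally instead of by induction on a minimal counterexample: complementing the inward-pointing negative edges (the paper reverses them one at a time and replaces $\phi(e)$ by $6-\phi(e)$), contracting the positive edges (the paper contracts them one at a time, re-extending the flow afterward), and observing that what remains is a matching problem among the contracted pieces. Your reformulation via the set $F$ and the tree-paths to the roots is correct, and it even sidesteps the paper's small argument that the value extended across a contracted edge stays in $\{1,\dots,5\}$. The vagueness about sourceless components is fixable: after normalization a sourceless positive component receives only nonnegative contributions summing to $0$, so it meets no negative edge at all and is a whole component of $G$; by connectivity either it is all of $G$ (in which case $\phi$ is already the desired flow) or it does not exist.

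The genuine gap is the final step, which you yourself flag as the hardest: you never actually prove the perfect matching exists, and the sketch you give would not succeed as stated. Your graph $H$ is not regular --- a source component with an internal negative edge can have weighted out-degree as low as $2$ --- and your plan to verify Tutte's condition by ``odd component has few edges to the rest $\Rightarrow$ acyclic in $G$ $\Rightarrow$ small and of bounded degree'' does not hold up: acyclic components can be arbitrarily large, cyclic $5$-edge-connectivity says nothing about cuts around acyclic pieces, and no counting argument is actually carried out. The paper's decisive idea, which is absent from your proposal, is to use the flow values themselves as edge multiplicities: in the fully contracted graph every vertex has boundary exactly $6$ with all edges directed outward, so the multigraph containing $\phi(e)$ parallel copies of each edge $e$ is $6$-\emph{regular}, and cyclic $5$-edge-connectivity of $G$ makes it $5$-edge-connected. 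The standard consequence of Tutte's theorem (a $2k$-regular, $(2k-1)$-edge-connected graph of even order has a perfect matching; the counting is immediate because every odd component sends an even number, hence at least $6$, of weighted edges to $S$, while $S$ absorbs at most $6|S|$) then produces the matching at once. Your weighted boundary identity $\sum_{e\,\mathrm{leaving}\,D}\phi(e)+2\sum_{e\,\mathrm{inside}\,D}\phi(e)=6k$ is exactly the ingredient needed to run this argument, but turning it into a proof of $o(H-S)\le|S|$ --- including the parity of the weighted cuts, the acyclic odd components, and the components with internal negative edges --- is real work that your proposal leaves undone.
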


\begin{proof}
    Suppose, for contradiction, that the lemma is false, and let $G, \phi$, and $\tau$ be a counterexample such that the number of near-source vertices is minimum. 

    \medskip
    
    First, suppose there exists $e \in E(G)$ that is negative and oriented toward its ends $u,v$. By the assumptions of the lemma, this means $u,v$ are near-sources, $u \neq v$, and $\partial\phi(u) = 0 = \partial\phi(v)$. But then define orientation $\tau'$ from $\tau$ by reversing $e$, and define function $\phi'$ from $\phi$ by setting $\phi'(e) = 6-\phi(e)$. Then, $\partial\phi'(u) = 6 = \partial\phi'(v)$, and $u,v$ are sources in $\tau'$. Thus $G,\phi',\tau'$ satisfies the lemma by minimality. But this is a contradiction because $G, \phi', \tau'$ satisfies the lemma exactly when $G,\phi, \tau$ does, and $G, \phi, \tau$ was chosen to be a counterexample. This means every negative edge in $E(G)$ is oriented away from its ends.

    \medskip
    
    Next, suppose there exists $e \in E(G)$ that is positive, and let $e$ be directed from $u$ to $v$. This means that $v$ is a near-source and $\partial\phi(v) = 0$. In particular, by minimum degree 3, this means $e$ is not a loop. Let $G'= G/e$, and let $w$ be the vertex formed by identifying $u$ and $v$. Let $\tau'$ and $\phi'$ be $\tau$ and $\phi$, respectively, restricted to $G'$. Then $\partial\phi'(w) = \partial\phi(u) + \partial \phi(v) = \partial \phi(u)$, and $w$ has the same type as $u$ (source or near-source). It follows that $G', \phi'$, and $\tau'$ satisfy the assumptions of the lemma, but with one less near-source.
    
    By minimality applied to $G', \phi',\tau'$, we obtain an orientation $\tau^*$ of $G-e$ so that every vertex is a near-source but one, either $u$ or $v$; and an integer-valued function $\psi^*: E(G-e) \to \{1,2,3,4,5\}$ so that $\partial \psi^*(x) = 0$ when $x \not\in \{ u,v\}$, and $\partial \psi^*(v) = -\partial \psi^*(u)$, which we define to be $s$. Without loss, say $v$ is the source and $u$ is a near-source. Since $\psi^*$ takes only positive values, it follows that $s > 0$. 
    Let $f$ be the unique edge oriented toward $u$ in $\tau^*$. Because $\partial \psi^*(w) = 0$, it follows that $s \le \psi^*(f)$. 
    This means that $s \in \{1,2,3,4,5\}$. Thus extending $\tau^*$ by orienting $e$ from $u$ to $v$, and extending $\psi^*$ by setting $\psi^*(e) = s$ is a solution to the lemma for $G, \psi, \tau$, which contradicts that they were chosen to be a counterexample. Therefore every edge in $E(G)$ is negative.

    \medskip

    Finally, by our arguments so far, it is the case that every edge in $E(G)$ is negative and oriented away from its ends. Thus every vertex $v \in V(G)$ is a source, and by the assumptions of the lemma, $\partial\phi(v) = 6$. Let $G^*$ be the (unsigned, undirected) multigraph with vertex-set $V(G)$, and edge-set obtained by   including every edge $e \in E(G)$ in $E(G^*)$ exactly $\phi(e)$ times. Then $G^*$ is 6-regular and 5-edge-connected
    . Since $G^*$ has an even number of (source) vertices, it has a perfect matching by Tutte's theorem \cite{tutteMatching}.\footnote{See also Exercise 16.4.9 in \cite{BondyJ.A.JohnAdrian2008Gt/J}.} This means $G$ also has a perfect matching $M$. Now, reversing every edge in $M$, and defining a nowhere-zero 6-flow $\psi^*: E(G) \to \Z$ by setting $\psi^*(e) = 6-\phi(e)$ if $e \in M$, and $\psi^*(e) = \phi(e)$ otherwise satisfies the lemma. This contradicts that $G, \phi$, and $\tau$ were chosen to be a counterexample, and completes the proof.
\end{proof}

Our next lemma relies on the following important result of DeVos et al., which was crucial in their proof of the 11-flow theorem. The essence of Lemma \ref{sZ6} is to show that the requirement on the edges given in Theorem \ref{11f} has a forcing on the vertex-boundaries. This forcing is what gives us an even number of source vertices.

For a function $f$ acting on the set $E$, we denote $\{e \in E: f(e) \neq 0\}$ by $\supp(f)$.

\begin{theorem}[\cite{DLLZZ}]\label{11f}
    Every flow-admissible signed graph has a nowhere-zero $\Z_2 \times \Z_3$-flow $\phi_2 \times \phi_3$ so that $\supp(\phi_2)$ contains an even number of negative edges.
\end{theorem}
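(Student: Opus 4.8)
Since this theorem is quoted from \cite{DLLZZ}, the plan in the present paper is simply to invoke it; below I outline the route I would take to prove it from scratch, the crux being the parity side-condition. Identifying $\Z_2 \times \Z_3$ with $\Z_6$, a nowhere-zero $\Z_2 \times \Z_3$-flow is the same as a nowhere-zero $\Z_6$-flow, written as a pair $(\phi_2,\phi_3)$ that is never $(0,0)$. Because signs and orientation are irrelevant over $\Z_2$ (as $-1 = 1$ there), the condition $\partial\phi_2 = 0$ says exactly that $H := \supp(\phi_2)$ meets every vertex in an even number of edges; that is, $H$ is an even subgraph. The observation I would isolate first is that the parity of the number of negative edges in any even subgraph is a \emph{switching invariant}: switching at a vertex $v$ flips the sign of an even number of edges of $H$, since $v$ has even degree in $H$. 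Decomposing $H$ into edge-disjoint cycles, this parity is precisely the number of \emph{unbalanced} cycles in the decomposition, modulo two. Hence the theorem is equivalent to producing a nowhere-zero $\Z_6$-flow whose $\Z_2$-support contains an even number of unbalanced cycles.

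With this reformulation in hand, I would begin from any nowhere-zero $\Z_2\times\Z_3$-flow $(\phi_2,\phi_3)$ --- the existence of one for a flow-admissible signed graph being the other, more routine, half of the statement, which I would draw from the flow theory of signed graphs --- and correct the parity of $H = \supp(\phi_2)$ when it is odd. The natural correction is to replace $\phi_2$ by $\phi_2 + \beta$, where $\beta$ is the $\Z_2$-indicator of a second even subgraph $B$; the support then becomes $H \triangle B$, and by the cycle computation above its negative-edge parity changes by the parity of the negative edges of $B$. Choosing $B$ to be a single \emph{unbalanced} cycle therefore flips the parity as required, while $\phi_3$ is left untouched and remains a $\Z_3$-flow. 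The only thing that can go wrong is the nowhere-zero condition: the new $\Z_2$-support $H \triangle B$ vanishes on $H \cap B$, and on those edges $\phi_3$ was permitted to be $0$. Since the zeros of $\phi_3$ all lie in $H$ (wherever $\phi_3 = 0$ we must have $\phi_2 \neq 0$), the correction is legitimate precisely when $B$ avoids the zero set of $\phi_3$.

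The step I expect to be the genuine obstacle is exactly guaranteeing such a parity-flipping cycle: an unbalanced cycle of $G$ disjoint from the edges on which $\phi_3$ vanishes. When no unbalanced cycle avoids those edges one cannot simply toggle, and this is where the argument of \cite{DLLZZ} does its work, through an induction on the structure of $G$ that reduces along small edge-cuts, contracts balanced pieces, and treats unbalanced ``barbell'' configurations separately, carrying both the nowhere-zero property and the parity as invariants of the induction. Flow-admissibility is what ultimately supplies the needed unbalanced cycle in the right place: Bouchet's characterization \cite{Bouchet} that $G - e$ has the same number of balanced components as $G$ for every edge $e$ is precisely the hypothesis that forbids the obstruction. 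Given the depth of this induction, the sensible course here is to invoke \cite{DLLZZ} directly, as is done.
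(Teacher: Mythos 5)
The paper offers no proof of this statement at all---it is imported verbatim from \cite{DLLZZ} and used as a black box---and your proposal correctly does the same, so the two approaches coincide. Your supplementary sketch (the negative-edge parity of an even subgraph as a switching invariant, and toggling $\phi_2$ by an unbalanced cycle avoiding the zeros of $\phi_3$) is sound as far as it goes and correctly isolates where the real difficulty lies, but since the paper contains no proof to compare it against, invoking \cite{DLLZZ} is indeed all that is required.
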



\begin{lemma}\label{sZ6}
    Every flow-admissible, cubic signed graph $G$ has an orientation $\tau$ and function $\phi: E(G) \to \Z$ such that (under $\tau$)
    \begin{enumerate}
        \item $\phi(e) \in \{1,2,3\}$ for all $e \in E(G)$,
        \item $\partial \phi(v) \in \{0,6\}$ for all $v \in V(G)$, 
        \item if $\partial \phi (v) = 6$ then $v$ is a source, if $\partial \phi(v) = 0$ then $v$ is a near-source, and
        \item there are an even number of source vertices.
    \end{enumerate}
\end{lemma}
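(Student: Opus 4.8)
The plan is to feed the nowhere-zero $\Z_2 \times \Z_3$-flow supplied by Theorem~\ref{11f} through the Chinese Remainder Theorem into a nowhere-zero $\Z_6$-flow, to reorient that flow into source/near-source form with boundary in $\{0,6\}$ and edge-values in $\{1,2,3\}$, and finally to read off the parity of the number of sources from the parity condition that Theorem~\ref{11f} imposes on $\supp(\phi_2)$. The conceptual core is this last step; conditions (1)--(3) are a structural normalization, while condition (4) is forced on us for free.

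First I would apply Theorem~\ref{11f} to obtain $\phi_2 \times \phi_3$ and set $C = \supp(\phi_2)$. Since $\Z_2$-flows ignore orientation, $\partial \phi_2(v) = 0$ says each vertex meets an even number of edges of $C$; as $G$ is cubic this number is $0$ or $2$, so $C$ is a disjoint union of cycles and $F := E(G)\setminus C$ has all degrees $1$ or $3$. Combining $\phi_2$ and $\phi_3$ by CRT gives a nowhere-zero $\Z_6$-flow whose value-class (an element of $\{1,2,3\}$ after identifying $x$ with $6-x$) is already forced: it is $2$ on every edge of $F$ (where $\phi_2 = 0$, so the value is even) and lies in $\{1,3\}$ on every edge of $C$ (where $\phi_2 = 1$, so the value is odd). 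Thus any integer function $\phi$ realizing this flow with values in $\{1,2,3\}$ automatically has $\phi(e)$ odd if and only if $e \in C = \supp(\phi_2)$, which is the one feature I will need for the parity count.

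The technical heart is to choose an orientation and a value $\phi(e) \in \{1,2,3\}$ on each edge so that (1)--(3) hold simultaneously. The cubic structure pins down the local pictures: a vertex of $F$-degree $3$ (three value-$2$ edges) can satisfy $\partial\phi(v)\in\{0,6\}$ with in-degree at most one only by being a source with all three edges outgoing; a vertex on $C$ (two edges of value in $\{1,3\}$ and one $F$-edge of value $2$) must be one of three configurations --- a source with $C$-values $\{1,3\}$, a near-source whose unique in-edge is a value-$3$ edge of $C$, or a near-source whose in-edge is its $F$-edge and whose two $C$-edges both have value $1$. In particular no vertex may see two value-$3$ edges, so the value-$3$ edges form a matching inside each cycle of $C$, and every value-$1$ edge of $C$ is directed out of both of its ends (hence is negative). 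I would therefore proceed cycle by cycle: orient each cycle of $C$ outward with value $1$, inserting value-$3$ edges as a correcting matching where the parity of negative edges on the cycle demands it, and then orient the $F$-edges so that each $F$-degree-$3$ vertex becomes a source. Because reversing a value-$1$ or value-$2$ edge leaves the range $\{1,2,3\}$, these orientations are rigid, and \emph{the main obstacle is exactly to show that such a globally consistent, in-degree-$\le 1$ orientation always exists while every value stays in $\{1,2,3\}$}. I expect this to require working with the nowhere-zero $\Z_6$-flow itself, so that the signature is respected automatically under edge-reversal, together with a minimal-counterexample reduction in the spirit of Lemma~\ref{Z6 to 6} (contracting positive edges to lower the number of near-sources).

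Granting (1)--(3), condition (4) then follows from a clean global count. Summing boundaries gives $\sum_{v} \partial\phi(v) = 6s$, where $s$ is the number of sources, since sources contribute $6$ and near-sources contribute $0$. On the other hand each positive edge contributes $+\phi(e)$ at its head and $-\phi(e)$ at its tail, hence $0$ to the sum, while a negative edge contributes $\pm 2\phi(e)$ according as it is directed out of or into both of its ends; thus $6s = 2\sum_{e \in \Sigma} \varepsilon_e\,\phi(e)$ for signs $\varepsilon_e \in \{\pm1\}$, that is, $3s = \sum_{e \in \Sigma} \varepsilon_e\,\phi(e)$. Reducing modulo $2$ kills the signs and the even values, leaving
\[
s \;\equiv\; \#\{e \in \Sigma : \phi(e)\ \text{odd}\} \;=\; \#\{e \in \Sigma : e \in \supp(\phi_2)\} \pmod 2,
\]
which is even by the defining property of $\phi_2 \times \phi_3$ in Theorem~\ref{11f}. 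Hence $s$ is even, giving (4). The only place the construction enters the parity argument is in guaranteeing that $\phi$ is odd exactly on $\supp(\phi_2)$, so the whole proof hinges on the value structure recorded in the second paragraph and on realizing it by an admissible orientation.
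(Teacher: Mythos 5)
Your CRT step, your classification of the local vertex types, and your parity count for condition (4) are all correct --- indeed the parity count is the paper's own argument in different notation (the paper writes $6w = 2(x_1+x_3)+4x_2+4x_3$ and concludes divisibility by $4$; your reduction $3s \equiv s \pmod 2$ is the same computation). The genuine gap is exactly the step you flag as ``the main obstacle'': establishing (1)--(3). You never prove it, and the route you sketch --- keeping the given signature $\Sigma$ fixed and directly orienting each cycle of $C=\supp(\phi_2)$ --- cannot work, because with $\Sigma$ held fixed the conclusion of the lemma can simply be false. Take $K_4$ with all edges positive: it is cubic and flow-admissible, but every positive edge has exactly one head, so if every vertex were a source or near-source the total in-degree $\sum_v |\delta^-(v)|$ would be at most $|V|=4$, while it must equal $|E|=6$. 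Your own analysis already exhibits the local form of this obstruction: around an all-positive cycle of $\supp(\phi_2)$, every value-$1$ edge would have to be directed out of both ends and hence negative, forcing all values on that cycle to be $3$, while no vertex may see two value-$3$ edges. No minimal-counterexample reduction in the spirit of Lemma~\ref{Z6 to 6} can repair this, since the fixed-signature statement it would be inducting on is false.

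The idea you are missing is vertex switching. The lemma is to be read, per the paper's standing convention in Section~\ref{S2}, up to switching equivalence: it suffices to produce $\tau$ and $\phi$ for some signed graph equivalent to $G$, because the nowhere-zero $6$-flow eventually produced by Lemma~\ref{Z6 to 6} transfers back across switches. With this freedom, (1)--(3) are nearly automatic: reverse every edge whose $\Z_6$-value lies in $\{4,5\}$ and replace its value by the inverse, so all values lie in $\{1,2,3\}$; then, since $G$ is cubic, every boundary lies in $\{-6,0,6\}$, and the only bad vertices are those with all three edges inward (boundary $-6$) or with two edges inward (boundary $0$). Switching at each bad vertex reverses its incident edges, negating its boundary and turning it into a source or near-source, at the cost of flipping the signs of those edges. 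The one point your parity argument then needs --- and which must be checked, since after switching the signature in the count $|\{e \in \Sigma : \phi(e) \text{ odd}\}|$ is no longer the original one --- is that each switch preserves the evenness of that count: it does, because $\partial\phi(v) \equiv 0 \pmod 2$ forces every vertex to be incident to an even number of odd-valued edges, so each switch changes the count by an even amount. With switching inserted, your second and fourth paragraphs assemble into essentially the paper's proof; without it, the proposal does not close.
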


\begin{proof}
    Let $\Sigma$ be the signature of $G$, and let $\phi_2 \times \phi_3 : E(G) \to \Z_2 \times \Z_3$ be a nowhere-zero flow so that $|\supp(\phi_2) \cap \Sigma|$ is even, which exists by Theorem \ref{11f}. Because $\Z_2 \times \Z_3$ and $\Z_6$ are isomorphic (we give the isomorphism below), we obtain a nowhere-zero $\Z_6$-flow $\phi_6$ so that $|\{e \in \Sigma : \phi_6(e) \in \{1,3,5\}|$ is even.  
    \begin{center}
    \begin{tabular}{ |c||c|c|c|c|c|c|} 
        \hline
        $\Z_2 \times \Z_3$ &(0,0)&(1,1)&(0,2)& (1,0) & (0,1) & (1,2)  \\ 
        $\Z_6$ & 0 & 1 &2&3&4&5\\ 
        \hline
    \end{tabular}
    \end{center}
    By reversing any edge $e$ with $\phi_6(e)\in\{4,5\}$ and replacing $\phi_6(e)$ with its inverse, we obtain an orientation $\tau$ of $G$ so that $\range(\phi_6) \subseteq \{1,2,3\}$. But then, this means there is an integer-valued function $\phi: E(G) \to \Z$ so that $\range(\phi) \subseteq \{1,2,3\}$, $|\{e \in \Sigma : \phi(e) \in \{1,3\}|$ is even, and $\partial \phi(v) = 0 \mod 6$ for all $v \in V(G)$. In fact, because $G$ is cubic and the range of $\phi$ is so limited, every vertex is one of the following six types with respect to $\tau$ and $\phi$.

    \begin{figure}[h]
        \centering
        \includegraphics[width=0.45\linewidth]{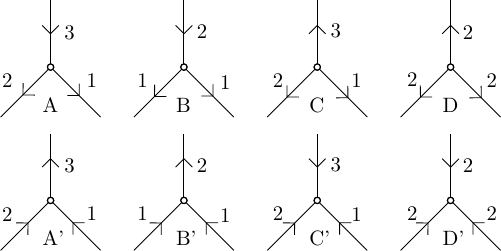}
    \end{figure}
    
    By switching at vertices of type A', B', C' and D', every vertex $v$ is either a source with $\partial \phi(v) = 6$, or a near-source with $\partial \phi(v) = 0$.

    \medskip
    It remains to show that the number $w$ of source vertices is even. By the paragraph above, it follows that $\sum_{v \in V(G)}\partial \phi(v) = 6w$.

    Now, consider the above sum from the point of view of the edges. Every positive edge $e$ contributes zero to the sum ($e$ is directed away from some vertex $v$ and towards some vertex $v'$, and so contributes $\phi(e)$ at $v$, and $-\phi(e)$ at $v'$). Every negative edge contributes $\pm2 \phi(e)$ to this sum (if $e$ is directed away from both its ends, it contributes $2\phi(e)$, and if $e$ is directed towards both its ends it contributes $-2\phi(e)$).  Define, for $i\in\{1,2,3\}$,
\begin{align*}
x_i
&=
\bigl|\{e\in\Sigma : \phi(e)=i \text{ and } e \text{ is directed away from its ends}\}\bigr|
\\
&\quad-
\bigl|\{e\in\Sigma : \phi(e)=i \text{ and } e \text{ is directed toward its ends}\}\bigr|.
\end{align*}
     Then, we obtain $\sum_{v \in V(G)}\partial \phi(v) = 2x_1+4x_2+6x_3$, and it follows that 
    \begin{equation}\label{wxyz}
        6w = 2(x_1+x_3) + 4x_2 + 4x_3. 
    \end{equation}
    But since $|\{e \in \Sigma : \phi(e) \in \{1,3\}|$ is even, it follows that $x_1 + x_3$ is even. This means the right hand side of Equation \ref{wxyz} is divisible by 4, and so the left side must also be divisible by 4. This means that $w$ is even, and completes the proof.
\end{proof}

\begin{proof}[Proof of Theorem \ref{mainCyc5ecBou}]
    Let $G$ be a flow-admissible, cyclically 5-edge-connected cubic signed graph. By Lemma \ref{sZ6}, $G$ has an orientation $\tau$ and a function $\phi$ satisfying the requirements of Lemma \ref{Z6 to 6}. By Lemma \ref{Z6 to 6}, $G$ has a nowhere-zero 6-flow.
\end{proof}

\bibliographystyle{abbrv}
\bibliography{bib}

\end{document}